\documentclass[12pt,reqno]{amsart}
\usepackage{amsmath}
\usepackage[english,  activeacute]{babel}
\usepackage[latin1]{inputenc}
\usepackage{amssymb}
\usepackage{amsthm}
\usepackage{graphics,graphicx}
\usepackage{array}
\usepackage{a4wide}
\allowdisplaybreaks
\usepackage{color, url}
\usepackage{float}
\usepackage{multicol}
\usepackage[shortlabels]{enumitem}
\usepackage[hypertexnames=false,colorlinks=true,allcolors=blue]{hyperref}
\theoremstyle{plain}
\newtheorem{theorem}{Theorem}[section]

\newtheorem{corollary}[theorem]{Corollary}
\newtheorem{lemma}[theorem]{Lemma}
\theoremstyle{definition}
\newtheorem{definition}[theorem]{Definition}

\begin{document}
		\title[A Combinatorial Proof of an Identity of Malik and Sarma on Overpartitions with Repeated Smallest Non-Overlined Part ]{A Combinatorial Proof of an Identity of Malik and Sarma on Overpartitions with Repeated Smallest Non-Overlined Part } 
	
		\author{Suparno Ghoshal and  Arijit Jana }
\address{School of Computer Science, University of Petroleum and Energy Studies (UPES), Dehradun, Uttarakhand, India}
	\email{ghoshalsuparno1331@gmail.com }
	\address{Department of Mathematics, National Institute of Technology Silchar, Assam 788010, India}
	\email{jana94arijit@gmail.com}

	\thanks{2020 \textit{Mathematics Subject Classification.}  11P83, 05A15, 05A17.\\
		\textit{Keywords and phrases. Integer partition,  Overpartitions\quad \quad \quad \quad \quad \quad \quad \quad }}
	
	\begin{abstract}
		Motivated by the work of Andrews and El Bachraoui on partitions with repeated smallest parts, Malik and Sarma recently extended this concept to overpartitions. Using generating functions and $q$-series techniques, they established several results, including four identities, and posed the problem of finding combinatorial proofs for these identities. Recently, Baruah, Li, and Mohanta provided combinatorial proofs for three of these identities. In this paper, we present a combinatorial proof of the remaining identity.
		\end{abstract}
	\maketitle
\section{Introduction and statement of results}
Let $a$ be any complex number and $n$ be nonnegative integer. Throughout this paper, we adopt the following standard notation:
	\begin{align*}
	(a;q)_n &:= \prod_{j=0}^{n-1} (1 - a q^{j}), \\
	(a;q)_\infty &:= \lim_{n \to \infty} (a;q)_n, \qquad |q| < 1.
	\end{align*}
	Recently, Andrews and El Bachraoui \cite{AndrewsElBachraoui2025} investigated integer partitions in which the smallest part occurs exactly $k$ times and all remaining parts are distinct. Let $\mathrm{sptk}_{d}(n)$ denote the number of such partitions of $n$. They showed that the associated generating functions can be written as linear combinations of $q$-Pochhammer symbols with polynomial coefficients in $q$. An overpartition is a partition in which the first occurrence of a part is allowed to be overlined; for further details, see \cite{CorteelLovejoy2004}. Motivated by the work in \cite{AndrewsElBachraoui2025}, Malik and Sarma \cite{Malik26} recently extended this concept to overpartitions. They introduced the following functions.

\begin{definition}\cite[Page 2]{Malik26}\label{def1}
	Let $\overline{\mathrm{Spt}}k(n)$ denote the set of overpartitions of $n$ where the smallest non-overlined part, say $s(\pi)$, appears $k$ times and every overlined part is bigger than $s(\pi)$. Accordingly, let $\overline{\mathrm{spt}}k(n)$ be the cardinality of $\overline{\mathrm{Spt}}k(n)$.
\end{definition}
In \cite{Malik26}, Malik and Sarma gave the following result, which is a direct analogue of Theorem 1 in  \cite{AndrewsElBachraoui2025}

\begin{theorem} \cite[Theorem 2.1]{Malik26}\label{MStheo1}
	For any positive integer $k$, let $\overline{\mathrm{spt}}k(n)$ denote the partition function as in Definition \ref{def1}. Then,
	\begin{align*}
		\sum\limits_{n=1}^{\infty}\overline{\mathrm{spt}}k(n)q^n=\overline{P}_k(q)\frac{(-q^2;q)_{\infty}}{(q^2;q)_{\infty}}+(-1)^k\frac{(q;q)_{k-1}}{(-q;q)_{k}},
	\end{align*}
	where 
	\begin{align*}
		\overline{P}_k(q)=\begin{cases}
			1 & \text{if } k=1, \\
			\frac{(q^{k-1}-1)\overline{P}_{k-1}(q)+q^{k-1}(1+q)}{1+q^k}& \text{if } k>1.
		\end{cases}
	\end{align*}
\end{theorem}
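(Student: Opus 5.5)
The plan is to derive a first-order recurrence in $k$ for the generating function directly from the definition, and then to check that the right-hand side of Theorem \ref{MStheo1} satisfies the same recurrence with the same initial value.

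\textbf{Step 1: a sum over the smallest non-overlined part.} Let $G_k(q)=\sum_{n\ge1}\overline{\mathrm{spt}}k(n)q^n$. Condition on the smallest non-overlined part $s(\pi)=m$. This part contributes $q^{km}$. Every other part exceeds $m$: a non-overlined part $j>m$ may occur any number of times, and each $j>m$ may independently appear once overlined. Hence
\begin{align*}
G_k(q)=\sum_{m\ge1}q^{km}\frac{(-q^{m+1};q)_\infty}{(q^{m+1};q)_\infty}=F(q)\,S_k(q),
\end{align*}
where
\begin{align*}
F(q)=\frac{(-q;q)_\infty}{(q;q)_\infty},\qquad S_k(q)=\sum_{m\ge1}q^{km}t_m,\qquad t_m=\frac{(q;q)_m}{(-q;q)_m}.
\end{align*}
Set $t_0=1$ and $t_\infty=(q;q)_\infty/(-q;q)_\infty$.

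\textbf{Step 2: a recurrence for $S_k$.} The key relation is $t_m(1+q^m)=t_{m-1}(1-q^m)$. Multiply it by $q^{(k-1)m}$ and sum over $m\ge1$. Using
\begin{align*}
\sum_{m\ge1}q^{jm}t_{m-1}=q^j(1+S_j),
\end{align*}
this gives
\begin{align*}
S_{k-1}+S_k=q^{k-1}(1+S_{k-1})-q^k(1+S_k),
\end{align*}
that is,
\begin{align*}
(1+q^k)S_k=(q^{k-1}-1)S_{k-1}+q^{k-1}(1-q)\qquad (k\ge2).
\end{align*}

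\textbf{Step 3: the case $k=1$.} Rewrite the relation as $t_{m-1}-t_m=q^m(t_{m-1}+t_m)$ and sum over $m\ge1$. The left side telescopes to $1-t_\infty$, and the right side is $q(1+S_1)+S_1$. Therefore
\begin{align*}
S_1=\frac{1-q-t_\infty}{1+q}.
\end{align*}
Multiplying by $F$, and using $F t_\infty=1$ and $F(1-q)/(1+q)=(-q^2;q)_\infty/(q^2;q)_\infty=:X$, we get
\begin{align*}
G_1=X-\frac{1}{1+q}.
\end{align*}
This is the theorem for $k=1$, since $\overline{P}_1=1$ and $(q;q)_0/(-q;q)_1=1/(1+q)$.

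\textbf{Step 4: induction on $k$.} Multiplying the recurrence of Step 2 by $F$, and noting $Fq^{k-1}(1-q)=q^{k-1}(1+q)X$, gives
\begin{align*}
(1+q^k)G_k=(q^{k-1}-1)G_{k-1}+q^{k-1}(1+q)X.
\end{align*}
Assume $G_{k-1}=\overline{P}_{k-1}X+(-1)^{k-1}c_{k-1}$, where $c_j=(q;q)_{j-1}/(-q;q)_j$, and substitute into the right-hand side. We then compare terms.

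The coefficient of $X$ is $(q^{k-1}-1)\overline{P}_{k-1}+q^{k-1}(1+q)$. After dividing by $1+q^k$, this is exactly the defining recurrence for $\overline{P}_k$.

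The constant term is $(q^{k-1}-1)(-1)^{k-1}c_{k-1}=(-1)^k(1-q^{k-1})c_{k-1}$. Since $c_k(1+q^k)=(1-q^{k-1})c_{k-1}$ by the definition of the $q$-Pochhammer symbols, dividing by $1+q^k$ gives $(-1)^kc_k$.

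This completes the induction. The only delicate point is the bookkeeping of boundary terms in Steps 2 and 3: the index shift producing $q^j(1+S_j)$ uses $t_0=1$, and the telescoping limit uses $t_\infty$. The whole argument rests on the identity $t_m(1+q^m)=t_{m-1}(1-q^m)$.
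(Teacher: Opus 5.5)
The paper does not prove this statement. It quotes it from Malik and Sarma as background, and the combinatorial argument of Section 2 does not use it, so there is no in-paper proof to compare with. Your argument is correct and complete.

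\begin{itemize}
\item The factorization $G_k=F\,S_k$ is right.
\item The shift $\sum_{m\ge1}q^{jm}t_{m-1}=q^j(1+S_j)$ is valid for $j\ge1$. That is why you correctly restrict Step 2 to $k\ge2$ and handle $k=1$ by telescoping, with $t_M\to t_\infty$ $q$-adically.
\item The inductive step reproduces both the recursion defining $\overline{P}_k$ and the term $(-1)^k c_k$ exactly.
\end{itemize}

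The low-order terms $G_1=q+q^2+3q^3+3q^4+7q^5+\cdots$ and $G_2=q^2+3q^4+2q^5+\cdots$ agree with direct enumeration.

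Your recurrence also lines up with what the paper does.

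\begin{itemize}
\item Your $k=1$ result, written as $(1+q)G_1=(1+q)X-1$, is exactly the generating-function form of Lemma \ref{lem2}. This is because $(1+q)X=(-q;q)_\infty/(q^2;q)_\infty$ generates $\overline{p}_u(n)$.
\item Multiplying Theorem \ref{thm1} by $q^{n+1}$ and summing over $n\ge1$ gives $2qG_1=(1+q^2)G_2+(1-q^2)X-1$. Here $(1-q^2)X=(-q^2;q)_\infty/(q^3;q)_\infty$ generates $|A(n)|$.
\item Subtracting your Step 4 recurrence at $k=2$, namely $(1+q^2)G_2=(q-1)G_1+q(1+q)X$, leaves precisely the $k=1$ identity.
\end{itemize}

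So, modulo Lemma \ref{lem2}, the identity the paper sets out to prove bijectively via $\zeta$ is just the $k=2$ instance of your recurrence.

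Your analytic route handles every $k$ at once and explains where the recursive shape of $\overline{P}_k$ comes from, but it has no bijective content. The paper goes the other way. It treats only the $k=1,2$ information needed for Corollary \ref{cor:corsptk2}, but does so combinatorially, which is what Malik and Sarma asked for.
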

The case $k= 2$ yields a result involving multiple combinations of the spt-functions and the corresponding subclasses of overpartitions. We state below the identity for  $\overline{\mathrm{spt}}2(n).$ 

\begin{corollary} \cite[Corollary 4.2]{Malik26}
\label{cor:corsptk2}
For $n>1$, we have $$\overline{\mathrm{spt}}2(n)+\overline{\mathrm{spt}}2(n-1)+\overline{\mathrm{spt}}2(n-2)+\overline{\mathrm{spt}}2(n-3)+\overline{p}_{uu}(n)=2\overline{p}_{u}(n-1),$$
where $\overline{p}_{uu}(n)$ denotes the number of overpartitions of $n$ with no non-overlined $1$'s and $2$'s and $\overline{p}_{u}(n)$ denotes the number of overpartitions of $n$ with no non-overlined $1$'s.
\end{corollary}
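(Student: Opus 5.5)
The plan is to derive the identity from Theorem~\ref{MStheo1} with $k=2$ by comparing generating functions, and then to say what a bijective version would have to do. Put $F(q)=\sum_{n\ge1}\overline{\mathrm{spt}}2(n)q^n$. The recursion gives
\[
\overline{P}_2(q)=\frac{(q-1)+q(1+q)}{1+q^2}=\frac{q^2+2q-1}{1+q^2}.
\]
The second term of Theorem~\ref{MStheo1} for $k=2$ is $\frac{(q;q)_1}{(-q;q)_2}=\frac{1-q}{(1+q)(1+q^2)}$. The left-hand side of Corollary~\ref{cor:corsptk2} (apart from $\overline{p}_{uu}$) has generating function
\[
(1+q+q^2+q^3)F(q)=(1+q)(1+q^2)F(q).
\]
This factor clears both denominators:
\[
(1+q)(1+q^2)F(q)=(q^2+2q-1)(1+q)\frac{(-q^2;q)_\infty}{(q^2;q)_\infty}+(1-q).
\]

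Next we identify the generating functions of the auxiliary counts. Overpartitions with no non-overlined $1$'s are generated by
\[
U(q):=\sum_{n\ge0}\overline{p}_u(n)q^n=\frac{(-q;q)_\infty}{(q^2;q)_\infty}=(1+q)\frac{(-q^2;q)_\infty}{(q^2;q)_\infty}.
\]
Overpartitions with no non-overlined $1$'s or $2$'s are generated by
\[
\frac{(-q;q)_\infty}{(q^3;q)_\infty}=(1-q^2)U(q).
\]
Substituting, the first term above is $(q^2+2q-1)U(q)$. Hence
\[
(1+q)(1+q^2)F(q)+(1-q^2)U(q)=2qU(q)+(1-q).
\]
For $n>1$ the polynomial $1-q$ contributes nothing to the coefficient of $q^n$. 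We also use that $F$ has no constant term, so every shifted term $\overline{\mathrm{spt}}2(n-j)$ with $n-j\le0$ vanishes. Extracting the coefficient of $q^n$ then gives exactly the stated identity. The only bookkeeping risk is this index range, which explains why the statement needs $n>1$: at $n=1$ the term $-q$ intervenes.

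The paper asks for a combinatorial proof, so the generating function identity above is the blueprint to realise bijectively. Write $\overline{\mathrm{Spt}}2$ for the set in Definition~\ref{def1} with $k=2$.
\begin{enumerate}
\item Interpret $(1+q)(1+q^2)F$ as pairs consisting of a member of $\overline{\mathrm{Spt}}2$ together with an optional overlined $\overline1$ and an optional overlined $\overline2$. These additions are legal only when they stay below the other overlined parts, which holds because every overlined part exceeds $s(\pi)\ge1$, with care needed when $s(\pi)=1$ and $\overline2$ is added.
\item Interpret $2qU$ as two copies of $\overline{\mathcal P}_u(n-1)$, each with a marked extra $1$.
\item Construct an explicit map splitting $2\,\overline{\mathcal P}_u(n-1)$ into the four shifted copies of $\overline{\mathrm{Spt}}2$ together with $\overline{\mathcal P}_{uu}(n)$. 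The natural first attempt is to sort an element of $\overline{\mathcal P}_u(n-1)$ by its smallest non-overlined part $s$ and its multiplicity. The extra $1$ is absorbed by increasing one copy of $s$, or by creating a new part, or it produces $\overline1$.
\end{enumerate}

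I expect the main obstacle to be matching the boundary cases in step 3. These are the overpartitions where $s$ occurs once, or where the overlined parts are not all larger than $s$; these must land in $\overline{\mathcal P}_{uu}(n)$. The coefficient $q^2+2q-1$ carries a minus sign, so a naive injection fails and a sign-reversing involution may be needed. For the result as stated, however, the analytic derivation above already constitutes a complete proof.
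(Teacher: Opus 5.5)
Your proof is correct, but it takes a genuinely different route from the paper's. The computations check out: $\overline{P}_2(q)=(q^2+2q-1)/(1+q^2)$, the multiplier $(1+q)(1+q^2)$ clears both denominators, and $\sum_{n\ge0}\overline{p}_{uu}(n)q^n=(1-q^2)U(q)$. Extracting the coefficient of $q^n$ for $n>1$ then gives the identity, and your explanation of why it fails at $n=1$ is also right.

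What you have written is essentially the $q$-series derivation from Theorem~\ref{MStheo1} by which the identity was first obtained. The paper's contribution is a combinatorial proof that never uses Theorem~\ref{MStheo1}. It passes through $\overline{\mathrm{spt}}1$ and the auxiliary set $A(n)$ of overpartitions with no part equal to $1$ (overlined or not) and no non-overlined $2$. It then proceeds in four steps:
\begin{enumerate}
\item It proves $2\overline{\mathrm{spt}}1(n)=\overline{\mathrm{spt}}2(n-1)+\overline{\mathrm{spt}}2(n+1)+|A(n+1)|$ (Theorem~\ref{thm1}) by an explicit bijection on two coloured copies of $\overline{\mathrm{Spt}}1(n)$.
\item It gets $|A(n)|+|A(n-1)|=\overline{p}_{uu}(n)$ by adjoining or deleting $\overline{1}$.
\item It quotes the Baruah--Li--Mahanta bijection for $\overline{\mathrm{spt}}1(n)+\overline{\mathrm{spt}}1(n-1)=\overline{p}_u(n)$.
\item It adds Theorem~\ref{thm1} at $n-1$ and $n-2$.
\end{enumerate}

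In your notation, put $S_1(q)=\sum_{n\ge1}\overline{\mathrm{spt}}1(n)q^n$ and $\mathcal{A}(q)=\sum_{n\ge0}|A(n)|q^n=(1-q)U(q)$. The paper's argument then amounts to splitting your multiplier into its factors $(1+q^2)$ and $(1+q)$:
\[
(1+q^2)F+(\mathcal{A}-1)=2qS_1,\qquad (1+q)S_1=U-1,\qquad (1+q)\mathcal{A}=(1-q^2)U .
\]
Multiplying the first relation by $(1+q)$ and substituting the other two gives exactly your identity $(1+q)(1+q^2)F+(1-q^2)U=2qU+(1-q)$.

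Every series in these three relations has nonnegative coefficients. So the minus sign in $q^2+2q-1$, which made you expect a sign-reversing involution in your step~3, never has to be realised. The intermediate family $\overline{\mathrm{Spt}}1$ absorbs it, and each piece becomes a direct bijection.

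The trade-off is this. Your route is short and airtight once Theorem~\ref{MStheo1} is granted. The paper's route is what Malik and Sarma actually asked for, namely a combinatorial proof.

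Your steps 1--3 remain a sketch, so the proposal does not supply a combinatorial proof. In step~1, merging $\overline{2}$ into $\pi$ is not even well defined when $s(\pi)=1$ and $\overline{2}$ is already a part. As an analytic proof of the stated identity, however, your argument stands.
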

They also deduced three other similar corollaries in \cite[Section 4]{Malik26}. In the concluding remarks of their paper, they posed the problem of finding combinatorial proofs of these corollaries. Subsequently, Baruah, Li, and Mahanta \cite{Baruah26} provided combinatorial proofs for three of them. However, Corollary~\ref{cor:corsptk2} remains open. In this paper, we provide a combinatorial proof of the  Corollary~\ref{cor:corsptk2}.
\section{Proof of the Main result}
We begin by defining the set $A(n)$ that consists of all possible overpartitions of $n$, having no part with part value $1$ and no non-overlined part of value $2$. Now, we provide a straightforward lemma,

\begin{lemma}\label{lem1}
    $|A(n)| + |A(n - 1)| = \overline{p}_{uu}(n)$.
\end{lemma}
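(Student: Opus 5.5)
We prove the identity $|A(n)|+|A(n-1)|=\overline{p}_{uu}(n)$ bijectively, by splitting the set counted by $\overline{p}_{uu}(n)$ according to whether it contains the overlined part $\overline{1}$. Recall that $\overline{p}_{uu}(n)$ counts overpartitions of $n$ with no non-overlined $1$'s and no non-overlined $2$'s. Such an overpartition may still contain $\overline{1}$, at most once, since only the first occurrence of a part can be overlined and no non-overlined $1$ is allowed. It may likewise contain $\overline{2}$ at most once.

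Let $B(n)$ be the set of overpartitions counted by $\overline{p}_{uu}(n)$, and split it as $B(n)=B_0(n)\sqcup B_1(n)$.
\begin{itemize}
\item $B_0(n)$ consists of those with no part equal to $1$ (overlined or not). An element of $B_0(n)$ has no part $1$ at all and no non-overlined $2$, which is exactly the definition of $A(n)$. Hence $B_0(n)=A(n)$.
\item $B_1(n)$ consists of those containing $\overline{1}$, which then appears exactly once. For $\pi\in B_1(n)$, deleting the part $\overline{1}$ gives an overpartition of $n-1$. This overpartition has no part $1$, since $\overline{1}$ occurred once and no non-overlined $1$ was present, and it has no non-overlined $2$. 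So it lies in $A(n-1)$.
\end{itemize}

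The inverse of the deletion map is adjoining a part $\overline{1}$ to any $\mu\in A(n-1)$. Because $\mu$ has no part $1$, the new $\overline{1}$ is the first and only occurrence of $1$, so overlining it is legal. The result has no non-overlined $1$'s or $2$'s, hence lies in $B_1(n)$. The two maps are clearly mutually inverse, so $|B_1(n)|=|A(n-1)|$.

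Adding the two counts gives $\overline{p}_{uu}(n)=|B_0(n)|+|B_1(n)|=|A(n)|+|A(n-1)|$. There is no real obstacle. The only point needing care is the overpartition convention that a part value can be overlined at most once, which is what makes $\overline{1}$ occur with multiplicity exactly $0$ or $1$ and makes the adjoining map well defined.
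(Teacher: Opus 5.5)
Your proof is correct and is essentially the paper's argument: both split the overpartitions counted by $\overline{p}_{uu}(n)$ according to whether $\overline{1}$ occurs, identify the class with no $1$'s with $A(n)$ via the identity map, and match the class containing $\overline{1}$ with $A(n-1)$ by deleting or adjoining $\overline{1}$. You also state explicitly that $\overline{1}$ can occur at most once, which the paper leaves implicit and which is exactly what makes the adjoining map well defined.
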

\begin{proof}
   If $\overline{P}_{uu}(n)$ is the set of overpartitions of $n$ with no non-overlined $1$'s and $2$'s, then define $\overline{P}^{0}_{uu}(n)$ (and $\overline{P}^{1}_{uu}(n)$) to be its subsets having no ones (and with overlined one) respectively. Since the sets $\overline{P}^{0}_{uu}(n)$ and $\overline{P}^{1}_{uu}(n)$ are mutually exclusive, we have $\overline{p}_{uu}(n) = |\overline{P}^{0}_{uu}(n) \cup \overline{P}^{1}_{uu}(n)|= |\overline{P}^{0}_{uu}(n)| + |\overline{P}^{1}_{uu}(n)|$.\\
   Now we try to give a bijection $\phi : A(n) \cup A(n - 1) \to \overline{P}^{0}_{uu}(n) \cup \overline{P}^{1}_{uu}(n)$. If $\pi \in A(n)$, then define $\phi(\pi) = \lambda$, where $\lambda \in \overline{P}^{0}_{uu}(n)$ is the overpartition obtained from $\pi$ by the following rule.
   \begin{enumerate}
       \item Use the $\phi$ mapping as an identity mapping. 
   \end{enumerate}
 In addition, if $\kappa \in A(n - 1)$, then let $\phi(\kappa) = \omega$, where $\omega \in \overline{P}^{1}_{uu}(n)$ is obtained by the following rule. 
 \begin{enumerate}
     \item Add an overline part of the value $1$ to $\kappa$.
 \end{enumerate}
    Further, we need to prove that $\phi$ has an inverse in order to establish that it is a bijection. Since $\phi$ acts as an identity mapping from $A(n) \to \overline{P}^{0}_{uu}(n)$, it is always bijective. In the other scenario when $\phi$ maps from the set $A(n - 1)$ to $\overline{P}^{1}_{uu}(n)$, we can define $\phi^{- 1}(\omega) = \kappa$ where $\omega \in \overline{P}^{1}_{uu}(n)$ and $\kappa \in A(n - 1)$ as follows.
    \begin{enumerate}
        \item Remove the overlined part valued $1$ from $\omega$.
    \end{enumerate}
    This completes the proof of the above stated lemma.
\end{proof}
\begin{lemma}\cite[Corollary 4.1]{Malik26}\label{lem2}
    $\overline{spt}1(n) + \overline{spt}1(n - 1) = \overline{p}_{u}(n)$.
\end{lemma}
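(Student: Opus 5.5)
The plan is to reduce the statement to a single bijective identity for overpartitions whose parts are all at least $2$, in the same way Lemma~\ref{lem1} reduced $\overline{p}_{uu}(n)$ to the sets $A(n)$ and $A(n-1)$. Let $B(n)$ be the set of overpartitions of $n$ all of whose parts (overlined or not) are $\ge 2$. An overpartition counted by $\overline{p}_{u}(n)$ either has no $1$ at all, or contains exactly one $\overline{1}$. Deleting or inserting that $\overline{1}$, exactly as in the proof of Lemma~\ref{lem1}, gives $\overline{p}_{u}(n)=|B(n)|+|B(n-1)|$. It would then suffice to compare $\overline{\mathrm{Spt}}1(n)$ with $B(n)$ directly. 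Small cases show they are \emph{not} equinumerous: $\overline{spt}1(1)=1$ while $|B(1)|=0$; $\overline{spt}1(2)=1$ while $|B(2)|=2$; $\overline{spt}1(3)=3$ while $|B(3)|=2$. From these and the generating functions, writing $F(m)=(-q^m;q)_\infty/(q^m;q)_\infty$, I expect
\begin{align*}
\sum_{n\ge1}\overline{spt}1(n)q^n=\sum_{s\ge1}q^sF(s+1)=F(2)-1+\frac{q}{1+q}.
\end{align*}
Multiplying by $1+q$ gives exactly $\overline{spt}1(n)+\overline{spt}1(n-1)=|B(n)|+|B(n-1)|$ for $n\ge2$. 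So the combinatorial task is to explain the correction term $q/(1+q)=q-q^2+q^3-\cdots$.

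Step one is to write $\overline{\mathrm{Spt}}1(n)$ as a disjoint union over $s\ge1$. Each piece consists of a single non-overlined $s$ together with an arbitrary overpartition into parts $\ge s+1$; this is the factor $q^sF(s+1)$. Step two is to realize the telescoping relation $(1-q^s)F(s)=(1+q^s)F(s+1)$ combinatorially. This relation says: an overpartition $\mu$ into parts $\ge s$ with at least one part equal to $s$ is obtained from one into parts $\ge s+1$ by adjoining either a non-overlined $s$ or an $\overline{s}$, and then possibly further non-overlined copies of $s$. I would define a map on the pieces with smallest non-overlined part $s$ as follows. Split according to whether the rest of the overpartition (parts $\ge s+1$) has its smallest part equal to $s+1$, and whether that part is overlined. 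Then shift the unique $s$ onto that smallest part, or promote the $s$ to $\overline{s}$, so that the pieces telescope to $B(n)$. One explicit rule to try: send the unique non-overlined $s$ together with all parts to an overpartition with parts $\ge2$ by replacing the pair ``$s$ and the next overlined structure'' by an overlined $s$ when $s\ge2$. When $s=1$, merge the $1$ with the smallest remaining part.

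The leftover terms of this telescoping should be only the overpartitions consisting of a single part $1$, or a pattern like a lone $1$ and $\overline{2}$, etc. Their alternating count is what the factor $1+q$ in $\overline{spt}1(n)+\overline{spt}1(n-1)$ cancels. So in the final step I would work on the union $\overline{\mathrm{Spt}}1(n)\cup\overline{\mathrm{Spt}}1(n-1)$. I would insert a marker, namely an extra $1$ added to the largest part of a member of $\overline{\mathrm{Spt}}1(n-1)$, so that the exceptional objects from the two sets pair off and everything else matches $B(n)\cup B(n-1)$. That target set is in bijection with the overpartitions counted by $\overline{p}_u(n)$ by the first paragraph.

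I expect the main obstacle to be step two: designing the map from $\overline{\mathrm{Spt}}1$ to $B$ so that it is genuinely invertible. The case $s=1$ is the delicate one, because the natural move ``delete the $1$'' lands in weight $n-1$ and has to be reconciled with the $\overline{spt}1(n-1)$ term. I would first check the proposed rules by hand for $n\le 6$, then adjust the case split on the smallest part $s+1$ until the boundary cases close up.
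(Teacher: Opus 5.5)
The paper gives no proof of its own here. The lemma is quoted from Malik and Sarma, where it is Theorem~\ref{MStheo1} with $k=1$ multiplied by $1+q$, and a bijective proof is credited to Baruah, Li and Mahanta. Against that, your proposal has a genuine gap.

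What you actually establish is correct: $\overline{p}_u(n)=|B(n)|+|B(n-1)|$ and $\sum_n\overline{spt}1(n)q^n=\sum_{s\ge1}q^sF(s+1)$. Your closed form $F(2)-1+q/(1+q)$ is also right, but you only ``expect'' it from small cases. It is exactly Theorem~\ref{MStheo1} at $k=1$, where $\overline{P}_1=1$ and the correction term is $-1/(1+q)$. It also follows in one line by summing $F(s)-F(s+1)=q^sF(s)+q^sF(s+1)$ over $s\ge2$, which gives $(1+q)H=(1+q)F(2)-1$ for $H=\sum_{s\ge1}q^sF(s+1)$. With that line supplied, your argument would be a complete $q$-series proof.

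The bijective part, which is what you set out to give, is never defined. ``One explicit rule to try'' and ``adjust the case split until the boundary cases close up'' describe a search, not a map. The rule you sketch is too imprecise to check: the ``pair'' replacement is unspecified, and ``merge the $1$ with the smallest remaining part'' has nothing to act on for the overpartition $1$ of $n=1$.

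The missing idea is that the alternating term $q/(1+q)$ only arises because you compare $\overline{\mathrm{Spt}}1(n)$ with $B(n)$ by itself, so it never has to be explained. Sort the members of $\overline{P}_u(n)$, $n\ge1$, by their smallest part value $m$. Either $m$ occurs only as $\overline{m}$, or a non-overlined $m$ occurs, and then $m\ge2$.

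A member of $\overline{\mathrm{Spt}}1(n)$ is one non-overlined $s$ together with parts all exceeding $s$. Overlining that $s$ is a bijection onto the first class. For a member of $\overline{\mathrm{Spt}}1(n-1)$, raising its unique non-overlined $s$ to a non-overlined $s+1$ is a bijection onto the second class; the inverse lowers one non-overlined copy of the smallest part by $1$. This proves the identity for every $n\ge1$, with no telescoping and no exceptional objects.

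In your own framework, overlining $s$ when $s\ge2$ and deleting the $1$ when $s=1$ maps $\overline{\mathrm{Spt}}1(n)$ onto $B(n-1)$ together with the members of $B(n)$ whose smallest part occurs only overlined. What remains is precisely the ``raise $s$ to $s+1$'' map from $\overline{\mathrm{Spt}}1(n-1)$ onto the other members of $B(n)$, and your plan never finds it.
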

A combinatorial proof of this lemma was provided by Baruah, Li, and Mahanta in \cite{Baruah26}. \\
In this paper, we provide the following theorem, which will serve as a stepping stone in our journey towards proving Corollary \ref{cor:corsptk2}.
\begin{theorem}\label{thm1}
    $2\overline{spt}1(n) = \overline{spt}2(n - 1) + \overline{spt}2(n + 1) + |A(n + 1)|$.
\end{theorem}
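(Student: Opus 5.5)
The plan is to prove Theorem~\ref{thm1} bijectively. I would take two disjoint copies $\overline{\mathrm{Spt}}1(n)\times\{1,2\}$ and partition them into three classes, in bijection with $\overline{\mathrm{Spt}}2(n+1)$, $\overline{\mathrm{Spt}}2(n-1)$ and $A(n+1)$. For $\pi\in\overline{\mathrm{Spt}}1(n)$ with smallest non-overlined part $s=s(\pi)$, I would sort $\pi$ by the status of the value $s+1$ in $\pi$:
\begin{enumerate}
\item[(i)] $s+1$ does not occur;
\item[(ii)] $s+1$ occurs overlined;
\item[(iii)] $s+1$ occurs non-overlined exactly once;
\item[(iv)] $s+1$ occurs non-overlined at least twice.
\end{enumerate}
The special value $s=1$ is treated separately, because that is where $A(n+1)$ has to come from.

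The first basic move goes from $\overline{\mathrm{Spt}}2(n+1)$ to $\overline{\mathrm{Spt}}1(n)$. For $\mu\in\overline{\mathrm{Spt}}2(n+1)$ with $s(\mu)=s\ge 2$, replace the pair $s,s$ by the two parts $s-1,s$. The weight drops by exactly $1$, the new smallest non-overlined part $s-1$ occurs once, and all overlined parts are still larger. Since $s$ occurred exactly twice in $\mu$, the image has $s(\pi)+1$ occurring non-overlined exactly once, so this is case (iii). The map is inverted by merging $s(\pi)$ with the single copy of $s(\pi)+1$ into two copies of $s(\pi)+1$. The second basic move goes from $\overline{\mathrm{Spt}}2(n-1)$ to $\overline{\mathrm{Spt}}1(n)$. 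Replace the pair $s,s$ by $s,s+1$, so the weight rises by $1$. Then $s$ is left alone and $s+1$ is non-overlined, which lands in cases (iii) and (iv). To separate these images from those of the first move, I would send them into the second copy. Alternatively, I would redirect the case where $s+1$ already occurred in $\mu$ by putting the new part into the overlined $s+1$ slot when it is free, which lands in case (ii).

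What remains must be matched with $A(n+1)$. This leftover is essentially the elements of case (i), part of case (ii), and the $s=1$ elements of $\overline{\mathrm{Spt}}2(n+1)$ (two non-overlined $1$'s), which correspond to $\pi\in\overline{\mathrm{Spt}}1(n)$ with $s(\pi)=1$ after removing a $1$. For $\pi$ with $s(\pi)=1$, the natural map to $A(n+1)$ replaces the single non-overlined $1$ by an overlined $\overline{2}$, which raises the weight by $1$. This is allowed exactly when neither $\overline 2$ nor a non-overlined $2$ is present in $\pi$. When $\overline 2$ is already present, I would instead trade the configuration $1,\overline 2$ for some other admissible configuration. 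Finally I would check the weight bookkeeping: each of the three classes should account for $2\overline{spt}1(n)$ minus the other two.

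The main obstacle is this last step. It consists of arranging the overlap between the images of the two basic moves, especially when $s(\pi)+1$ appears both overlined and non-overlined, and the boundary case $s=1$, so that the leftover is exactly in bijection with $A(n+1)$ with nothing double counted. I would first try to fix the assignment of the two copies by the parity of the number of non-overlined copies of $s+1$. Before committing to a particular rule, I would verify the resulting bookkeeping on small $n$ (say $n\le 6$) against the generating function of Theorem~\ref{MStheo1}.
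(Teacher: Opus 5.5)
Your two basic moves are sound, but the proposal stops exactly where the proof has to happen.

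The move $s,s\mapsto s,s+1$ is a bijection from $\overline{\mathrm{Spt}}2(n-1)$ onto those $\pi\in\overline{\mathrm{Spt}}1(n)$ containing a non-overlined $s(\pi)+1$; this is the inverse of the paper's Case~2 for the green copy. The move $s,s\mapsto s-1,s$ is a bijection from the $s\ge 2$ part of $\overline{\mathrm{Spt}}2(n+1)$ onto those $\pi$ in which $s(\pi)+1$ occurs non-overlined exactly once and $\overline{s(\pi)+1}$ does not occur. Your classes (ii) and (iii) overlap, so ``case (iii)'' is not quite the image.

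The gap is that the leftover of the two copies is never matched with $A(n+1)$ together with the $s=1$ part of $\overline{\mathrm{Spt}}2(n+1)$. Note that this $s=1$ part is a target, not part of the leftover. ``Trade $1,\overline{2}$ for some other admissible configuration'' and the parity idea are not a construction. The heuristic behind this step, that $A(n+1)$ comes from $s=1$, is also wrong. Your only explicit map into $A(n+1)$, namely $1\mapsto\overline{2}$, produces only elements containing $\overline{2}$. So elements such as $(n+1)$ and $(\overline{n+1})$ are never reached, and you give no map at all for case (i) with $s\ge 2$.

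Concretely, take $n=3$, where $\overline{\mathrm{Spt}}1(3)=\{(1,2),(1,\overline{2}),(3)\}$. Your moves consume $(1,2)$ in both copies, as the image of $(2,2)$ and of $(1,1)$. So $(1,1,2)$ cannot be matched by removing a $1$, and $1\mapsto\overline{2}$ is blocked on $(1,\overline{2})$. The only sensible completion sends the two copies of $(1,\overline{2})$ to $(1,1,2)$ and $(1,1,\overline{2})$, and the two copies of $(3)$ to $(4)$ and $(\overline{4})$. Thus $A(4)$ comes entirely from $s=3$.

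The missing ingredients are two moves into $A(n+1)$ that work for every $s$:
\begin{itemize}
\item $s\mapsto s+1$, kept non-overlined. For $s\ge 2$ this is a bijection onto those $\gamma\in A(n+1)$ whose smallest part occurs non-overlined.
\item When $s+1$ is not a part, $s\mapsto\overline{s+1}$. This is a bijection onto those $\gamma\in A(n+1)$ whose smallest part occurs only overlined, which includes every $\gamma$ containing $\overline{2}$.
\end{itemize}

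The paper arranges things so that nothing collides. One copy is sent entirely to $\overline{\mathrm{Spt}}2(n+1)$ (adjoin a $1$ when $s=1$) or to $A(n+1)$ ($s\mapsto s+1$ when $s\ge 2$). The other copy is split by the status of $s+1$:
\begin{itemize}
\item if $s+1$ is absent, $s\mapsto\overline{s+1}$ lands in $A(n+1)$;
\item if a non-overlined $s+1$ is present, your second move lands in $\overline{\mathrm{Spt}}2(n-1)$;
\item if $s+1$ occurs only overlined, $s,\overline{s+1}\mapsto s+1,s+1$ lands in the $s\ge 2$ part of $\overline{\mathrm{Spt}}2(n+1)$.
\end{itemize}

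The key difference is that the paper's version of your first move is $s,s\mapsto s-1,\overline{s}$ rather than $s-1,s$. Its image is therefore disjoint from that of your second move, so both fit into a single copy and the other copy stays free. You could repair your arrangement by composing your first move with the overline toggle on the single copy of $s+1$ and placing it in the second copy, which turns it into the paper's move. As written, however, the bijection, and hence the proof, is incomplete.
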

\begin{proof}
    Our proof of the above theorem is inspired by Keith and Sagan's \cite{KeithSagan} combinatorial proof of a theorem by Banerjee, Bringmann, and Dixit \cite{BBD26}. Take the set $\overline{Spt}1(n)$ and make a copy of the set. Now, color all overpartitions of the original set blue, while coloring the overpartitions of the copied set green. Denote those two sets as $\overline{Spt}^{B}1(n)$ (and $\overline{Spt}^{G}1(n)$) respectively. This helps reduce the problem in Theorem \ref{thm1} to 
    \begin{align}
     \overline{spt}^{B}1(n) + \overline{spt}^{G}1(n) = \overline{spt}2(n - 1) + \overline{spt}2(n + 1) + |A(n + 1)|.
     \end{align}
     It remains only to establish a bijection
     \[
     \zeta: \overline{Spt}^{B}1(n) \cup \overline{Spt}^{G}1(n) \to \overline{Spt}2(n - 1) \cup \overline{Spt}2(n + 1) \cup A(n + 1).
\]
     For an overpartition $\alpha \in \overline{Spt}^{B}1(n)$, we define the mapping for two different kinds of partitions depending on whether the smallest part of the partition is equal to $1$ or strictly greater than $1$. \\
     {\bf Case 1 ($s(\alpha) = 1$):} In this case define $\zeta(\alpha) = \mu$, where $\mu \in \overline{Spt}2(n + 1)$ is achieved by the following method.
     \begin{enumerate}
         \item Simply add another non-overlined part valued $1$ to $\alpha$, i.e. $\alpha \mapsto \alpha, 1$ and remove the blue color.
     \end{enumerate}
     {\bf Case 2 ($s(\alpha) \geq 2$):} 
     Define $\zeta(\alpha) = \mu$, where $\mu \in A(n + 1)$ is obtained by the following rule.
     \begin{enumerate}
         \item Take the non-overlined $s(\alpha)$ and map it to $s(\alpha) + 1$ and keep it non-overlined and remove the color. 
     \end{enumerate}
     Now, we take the partitions $\beta \in \overline{Spt}^{G}1(n)$ and define $\zeta(\beta) = \gamma$, where $\gamma \in \overline{Spt}2(n - 1) \cup \overline{Spt}2(n + 1) \cup A(n + 1)$ is obtained using the following method.\\
     {\bf Case 1 ($s(\beta) + 1 \text{ is not a part of } \beta$) :} Take the smallest part $s(\beta)$, and convert it into an overlined part having part value $s(\beta) + 1$. The transformed overpartition belongs to the set $A(n + 1)$. \\
     {\bf Case 2 (non-overlined part $s(\beta) + 1$ is part of $\beta$):} Just reduce the part value of the part $s(\beta) + 1$ by $1$. The overpartition after the mapping definitely belongs to the set $\overline{Spt}2(n - 1)$.\\
     {\bf Case 3 (exactly one part $s(\beta) + 1$ is part of $\beta$ and it is overlined):} In this case, remove the overline from the $s(\beta) + 1$ part and add one to the smallest part i.e. $(s(\beta) \mapsto s(\beta) + 1)$. The new overpartition will be part of the set $\overline{Spt}2(n + 1)$. \\
    In addition, we provide the inverse of $\zeta$ which will guarantee that $\zeta$ is a bijection. \\
    {\bf Case 1(i) (Assume $\gamma \in A(n + 1)$ and $s(\gamma)$ occurs at least once non-overlined) :} Take the $s(\gamma)$, subtract $1$ from it and color it with blue.\\
    {\bf Case 1(ii) (Assume $\gamma \in A(n + 1)$ and $s(\gamma)$ occurs only overlined)} Subtract $1$ from $s(\gamma)$, remove the overline, and color it green.\\
    {\bf Case 2 (Let $\gamma \in \overline{Spt}2(n - 1)$)} Simply do the following $s(\gamma) \mapsto s(\gamma) + 1$ and color it green.\\
    {\bf Case 3(i) (Let $\gamma \in \overline{Spt}2(n + 1)$ and $s(\gamma) = 1$ and $s(\gamma)$ is not overline) :} Remove the $1$ from $\gamma$ and color the rest of the partition with blue.\\
    {\bf Case 3(ii) (Let $\gamma \in \overline{Spt}2(n + 1)$ and $s(\gamma) > 1$) :} Replace one $s(\gamma)$ with $s(\gamma) - 1$.
    while adding an overline on the other $s(\gamma) \mapsto \overline{s(\gamma)}$. 
    Color every part with green. 

    The above mentioned mapping will serve as $\zeta^{-1}$. 
     Since, $\zeta: \overline{Spt}^{B}1(n) \cup \overline{Spt}^{G}1(n) \to \overline{Spt}2(n - 1) \cup \overline{Spt}2(n + 1) \cup A(n + 1)$ is a bijective mapping, we have our desired identity. 
\end{proof}
\begin{proof}[Proof of Corollary \ref{cor:corsptk2}:]
    In Theorem \ref{thm1}, first replace $n$ with $n - 1$, then replace $n$ with $n - 2$, which gives us the following,
    $$2\overline{spt}1(n - 1) = \overline{spt}2(n - 2) + \overline{spt}2(n) + |A(n)|.$$ \\
    $$2\overline{spt}1(n - 2) = \overline{spt}2(n - 3) + \overline{spt}2(n - 1) + |A(n - 1)|.$$
    Adding the above equations we get,
    $$2(\overline{spt}1(n - 1) + \overline{spt}1(n - 2)) = \overline{spt}2(n - 2) + \overline{spt}2(n) + |A(n)| + \overline{spt}2(n - 3) + \overline{spt}2(n - 1) + |A(n - 1)|.$$
    Now, using Lemma \ref{lem1} and \ref{lem2}, we reduce the above equation as below,
    $$2 \overline{p}_{u}(n - 1) = \overline{spt}2(n - 2) + \overline{spt}2(n) + \overline{spt}2(n - 3) + \overline{spt}2(n - 1) + \overline{p}_{uu}(n).$$
    This completes the combinatorial proof of Corollary \ref{cor:corsptk2}.  
\end{proof}

{\bf Data Availability Statement}\\
Data sharing is not applicable to this article as no datasets were generated or analyzed during the current study.\\
{\bf Declarations}\\
		{\bf Author contributions:} All authors have contributed equally to the preparation of this manuscript.\\
{\bf 	Conflict of interest}
The authors declare that there is no conflict of interest.\\
{\bf  Funding} : The authors did not receive any funding for this research.

\end{document}